\title{Matrix decompositions over the double numbers}
\author{Ran Gutin \\ Department of Computer Science, Imperial College London}
\date{\today}
\newenvironment{Shaded}{}{}
\newcommand{\BuiltInTok}[1]{#1}
\newcommand{\CommentTok}[1]{\textcolor[rgb]{0.38,0.63,0.69}{\textit{#1}}}
\newcommand{\ControlFlowTok}[1]{\textcolor[rgb]{0.00,0.44,0.13}{\textbf{#1}}}
\newcommand{\DecValTok}[1]{\textcolor[rgb]{0.25,0.63,0.44}{#1}}
\newcommand{\ImportTok}[1]{#1}
\newcommand{\KeywordTok}[1]{\textcolor[rgb]{0.00,0.44,0.13}{\textbf{#1}}}
\newcommand{\NormalTok}[1]{#1}
\newcommand{\OperatorTok}[1]{\textcolor[rgb]{0.40,0.40,0.40}{#1}}
\newcommand{\StringTok}[1]{\textcolor[rgb]{0.25,0.44,0.63}{#1}}
\providecommand{\tightlist}{%
  \setlength{\itemsep}{0pt}\setlength{\parskip}{0pt}}
\newtheorem{definition}{Definition}[section] \newtheorem{lemma}{Lemma}[section] \newtheorem{theorem}{Theorem}[section]   
\begin{document}
\maketitle
\begin{abstract}
  We take matrix decompositions that are usually applied to matrices over
  the real numbers or complex numbers, and extend them to matrices over an
  algebra called the double numbers. In doing so, we unify some
  matrix decompositions: For instance, we reduce the LU decomposition of
  real matrices to LDL decomposition of double matrices. We
  similarly reduce eigendecomposition of real matrices to singular value
  decomposition of double matrices. Notably, these are opposite to
  the usual reductions. This provides insight into linear algebra over the
  familiar real numbers and complex numbers. We also show that algorithms
  that are valid for complex matrices are often equally valid for
  double matrices. We finish by proposing a new matrix
  decomposition called the Jordan SVD, which we use to challenge a claim
  made in Yaglom's book Complex Numbers In Geometry concerning Linear
  Fractional Transformations over the double numbers.
\end{abstract}

\hypertarget{introduction}{%
\section{Introduction}\label{introduction}}

In this paper, we consider extending numerical linear algebra to a
hypercomplex number system called the double numbers (and related
number systems like the double-complex numbers / tessarines). Specifically,
we are interested in the subject of matrix decompositions.

The process we follow is as follows:

\begin{enumerate}
\def\labelenumi{\arabic{enumi}.}
\tightlist
\item
  Take a matrix decomposition over the complex numbers (such as LDL,
  SVD, QR).
\item
  Extend it to double matrices via analogy.
\item
  Break it into real components.
\item
  Observe what decompositions of real matrices emerge out.
\end{enumerate}

We often find that the outcome of step 4 is interesting, in that the
decomposition of real matrices that emerges from step 4 is often
something recognisable. Some examples include: If in step 1, we start
with the LDL decomposition (which only applies to Hermitian matrices),
then in step 4 we derive the LU decomposition (which applies to most
other matrices). Another example is that if we start with the Singular
Value Decomposition, which is often seen as a special case of
eigendecomposition, then in step 4 we derive precisely
eigendecomposition.

We also observe that algorithms that are valid for the decomposition in
step 1 can be extended to the decomposition in step 4. Sometimes this
extension of algorithms can be done automatically.

The third step (breaking into real components) leads us to a minor
novelty: We write a double matrix using the notation \([A,B]\),
which we define to mean \(A\frac{1 + j}2 + B^T\frac{1 - j}2\) where
\(A\) and \(B\) are real matrices. Crucially, note that \(B\) is
transposed. The symbol \(j\) is defined below.

\hypertarget{definitions}{%
\section{Definitions}\label{definitions}}

\hypertarget{sec:definitions}{%
\subsection{Double numbers and double
matrices}\label{sec:definitions}}

The \emph{double numbers}, called split-complex numbers on Wikipedia, and \emph{perplex numbers} in some other sources) are the hypercomplex number system in which every number is of the form

\[a + bj, a \in \mathbb R, b \in \mathbb R\]

where \(j\) is a number that satisfies \(j^2 = 1\) without \(j\) being
either \(1\) or \(-1\).

The definition of the arithmetic operations should be clear from the
above description. Define \((a + bj)^* = a - bj\) (similar to complex
conjugation).

Consider the basis \(e = \frac{1+j}2\) and \(e^* = \frac{1-j}2\).
Observe that \(e^2 = e\), \((e^*)^2 = e^*\) and \(e e^* = e^* e = 0\).
Thus \((a e + b e^*) (c e + d e^*) = (ac) e + (bd) e^*\). In other
words, multiplication of double numbers is componentwise in this
basis. Thanks to this, we see that the double numbers can be
defined as the algebra \(\mathbb R \oplus \mathbb R\), where \(\oplus\)
denotes the direct sum of algebras. Observe that
\((a e + b e^*)^* = b e + a e^*\).

We shall consider matrices of double numbers. A matrix \(M\) of
double numbers can be written in the form
\(A \frac{1 + j}2 + B^T \frac{1 - j}2\) where \(A\) and \(B\) are real
matrices. Write \([A,B]\) for \(A \frac{1 + j}2 + B^T \frac{1 - j}2\).
Observe the following identities: \[\begin{aligned}
\\
[A,B] + [C,D] &= [A+C,B+D] \\
[A,B] \times [C,D] &= [AC, DB] \\
[A,B]^* &= [B,A]
\end{aligned}\]

In the above \([A,B]^*\) refers to the conjugate-transpose operation.
Notice that the second component of \([A,B] \times [C,D]\) is \(DB\),
not \(BD\).

\hypertarget{sec:families}{%
\subsection{Families of double matrices}\label{sec:families}}

We say that a double matrix \(M\) is \emph{Hermitian} if it satisfies \(M^* = M\). Observe then that a Hermitian matrix is precisely of the form \([A,A]\).

We say that a double matrix \(M\) is \emph{unitary} if it satisfies \(M^* M = I\). Observe then that a unitary matrix is precisely of the form \([A,A^{-1}]\).

We say that a double matrix \(M\) is \emph{lower triangular} if it is of the form \([L,U]\) where \(L\) is a lower triangular real matrix and \(U\) is an upper triangular real matrix. Similarly, a matrix of the form \([U,L]\) is called \emph{upper triangular}.

A double matrix is called \emph{diagonal} if it is of the form \([D,E]\) where \(D\) and \(E\) are diagonal real matrices.

A double matrix is real precisely when it is of the form \([A,A^T]\).

\hypertarget{double-complex-numbers-and-matrices}{%
\subsection{Double-complex numbers and matrices}\label{double-complex-numbers-and-matrices}}

A \emph{double-complex number} (\cite{doublecomplex1}) is a number of the form \(w + zj\) where \(w\) and \(z\) are complex numbers, and \(j^2 = +1\). Multiplication is the same as over the double numbers, namely: \((w + zj)(w' + z'j) = ww' + zz' + j(wz' + zw')\). The product is a commutative one. The definition of conjugation is \((w + zj)^* = w - zj\). Sometimes these numbers are called \emph{tessarines}.

By a \emph{double-complex matrix}, we mean a matrix whose entries are
double-complex numbers. A double-complex matrix can always be expressed in the
form \([A,B]\), which we define to mean
\(A \frac{1+j}{2} + B^T \frac{1-j}{2}\) where \(A\) and \(B\) are
complex matrices. Observe that we use the tranpose of \(B\) rather than
the conjugate-transpose of \(B\).

Observe then the following identities: \[\begin{aligned}
\\
[A,B] + [C,D] &= [A+C,B+D] \\
[A,B] \times [C,D] &= [AC, DB] \\
[A,B]^* &= [B,A]
\end{aligned}\]

The various families of matrices (Hermitian, unitary, triangular,
diagonal) have identical descriptions over the double-complex numbers as they
do over the double numbers. A double-complex matrix \(M\) is a complex
matrix precisely when it is of the form \(M = [A,A^T]\) where \(A\) is a
complex matrix.

We need these numbers in order to study Singular Value Decomposition
over double or double-complex matrices.

\hypertarget{computing-with-double-numbers}{%
\subsection{Computing with double numbers}\label{computing-with-double-numbers}}

In order to present the algorithms in this paper, we use the Python library Sympy (version 1.6, \cite{sympy}). We have also implemented some of the algorithms below using the C++ library Eigen (\cite{eigen}). We have not included the Eigen code here. Eigen is significantly (likely many orders of magnitude) faster than Sympy for the purposes of computing with double matrices. The slow speed of Sympy is caused by the fact that it's a symbolic computation library, and when using it, we represent a double number as a symbolic expression in the form \(a + b\cdot j\). We need to implement the following functions to be able to do experiments with double numbers:

\begin{Shaded}
\begin{Highlighting}[]
\ImportTok{from}\NormalTok{ sympy }\ImportTok{import} \OperatorTok{*}

\NormalTok{J }\OperatorTok{=}\NormalTok{ var(}\StringTok{\textquotesingle{}J\textquotesingle{}}\NormalTok{)}

\KeywordTok{def}\NormalTok{ simplify\_split(e):}
\NormalTok{    real\_part }\OperatorTok{=}\NormalTok{ (e.subs(J,}\DecValTok{1}\NormalTok{) }\OperatorTok{+}\NormalTok{ e.subs(J,}\OperatorTok{{-}}\DecValTok{1}\NormalTok{))}\OperatorTok{/}\DecValTok{2}
\NormalTok{    imag\_part }\OperatorTok{=}\NormalTok{ (e.subs(J,}\DecValTok{1}\NormalTok{) }\OperatorTok{{-}}\NormalTok{ e.subs(J,}\OperatorTok{{-}}\DecValTok{1}\NormalTok{))}\OperatorTok{/}\DecValTok{2}
    \ControlFlowTok{return}\NormalTok{ simplify(real\_part }\OperatorTok{+}\NormalTok{ J}\OperatorTok{*}\NormalTok{imag\_part)    }

\KeywordTok{def}\NormalTok{ first\_part(m):}
    \ControlFlowTok{return}\NormalTok{ simplify\_split((}\DecValTok{1} \OperatorTok{+}\NormalTok{ J) }\OperatorTok{*}\NormalTok{ m).subs(J,}\DecValTok{0}\NormalTok{)}

\KeywordTok{def}\NormalTok{ second\_part(m):}
    \ControlFlowTok{return}\NormalTok{ simplify\_split((}\DecValTok{1} \OperatorTok{{-}}\NormalTok{ J) }\OperatorTok{*}\NormalTok{ m).subs(J,}\DecValTok{0}\NormalTok{)}

\KeywordTok{def}\NormalTok{ scabs(x):}
    \CommentTok{"""Absolute value for split{-}complex numbers."""}
    \ControlFlowTok{return}\NormalTok{ sqrt(}\BuiltInTok{abs}\NormalTok{(first\_part(x)) }\OperatorTok{*} \BuiltInTok{abs}\NormalTok{(second\_part(x)))}
\end{Highlighting}
\end{Shaded}

The function \texttt{simplify\_split} is particularly useful for simplifying expressions involving double numbers. It is based on the identity \(f{\left(a + b j \right)} = \frac{f{\left(a + b \right)} + f{\left(a - b \right)}}{2} + j \frac{f{\left(a + b \right)} - f{\left(a - b \right)}}{2}\) where \(f\) is any analytic function over \(\mathbb R\). We end up using it a lot in our algorithms, though it's completely unnecessary if one doesn't use symbolic algebra the way we have. The functions \texttt{first\_part} and \texttt{second\_part} are useful for extracting multiples of the basis vectors \(e = \frac{1 + j}2\) and \(e^* = \frac{1 - j}2\) respectively. To extract \(B\) from \(M = [A,B]\), one needs to compute \texttt{second\_part(M.T)} (where \texttt{M.T} means the transpose of \(M\)). The expression \(M^*\) can be expressed as \texttt{M.T.subs(J,-J)}.

It is straightforward to adapt the code to other Computer Algebra Systems like Sagemath (\cite{sagemath}).

\hypertarget{simple-matrix-decompositions-over-the-double-numbers}{%
\section{Simple matrix decompositions over the double numbers}\label{simple-matrix-decompositions-over-the-double-numbers}}

In this section, we will consider analogues of various matrix decompositions over the double numbers. We will observe by taking components (in other words, by extracting \(A\) and \(B\) from \([A,B]\)) that these decompositions are equivalent to well-known decompositions of real matrices.

\hypertarget{ldl-decomposition-a-variant-of-cholesky}{%
\subsection{LDL decomposition (a variant of Cholesky)}\label{ldl-decomposition-a-variant-of-cholesky}}

Recall the LDL decomposition (\cite{cholesky,computations}): Let \(M\) be a Hermitian complex matrix; we have that \(M\) can almost always be expressed in the form \[M = LDL^*\] where \(L\) is a lower triangular matrix and \(D\) is a diagonal Hermitian matrix. What is the analogue of this over the double numbers? If we interpret the above decomposition over the double numbers using the definitions in section \ref{sec:families}, we get \[[A,A] = [L,U] [D,D] [L,U]^*,\] or in other words \[A = L D U,\] which is the familiar LDU decomposition.

Notice that while it's known that LDL trivially reduces to an instance of LU (a more general decomposition), it is not as obvious that a reduction in the opposite direction is possible. We have demonstrated that such a thing is possible.

\hypertarget{singular-value-decomposition}{%
\subsection{Singular Value Decomposition}\label{singular-value-decomposition}}

In the following, we work with square matrices. It is possible to generalise to non-square matrices.

Consider the singular value decomposition (\cite{strang09}): \[M = U S V^*\] where \(U\) and \(V\) are unitary, and \(S\) is diagonal and Hermitian. If we interpret this over the double numbers (using section \ref{sec:families}), we get \[[A,B] = [P,P^{-1}] [D,D] [Q^{-1},Q]\] We break into components and get \[A = PDQ^{-1}, B = QDP^{-1}.\] Finally, observe that \(AB = P D^2 P^{-1}\) and \(BA = Q D^2 Q^{-1}\). In other words, we get the familiar eigendecompositions of \(AB\) and \(BA\) (which it turns out have the same eigenvalues).

Take note though that in order for a double matrix \([A,B]\) to have an SVD, it may be necessary to work with a \emph{complexification} of the double numbers. This complexification is sometimes called the \emph{double-complex numbers} or the \emph{tessarines}.

Finally, notice that while it's obvious that SVD is reducible to eigendecomposition (namely, the eigendecompositions of \(A^* A\) and \(A A^*\)) it is less obvious that a reduction in the opposite direction is possible. We have provided such a reduction here.

\hypertarget{qr-decomposition}{%
\subsection{QR decomposition}\label{qr-decomposition}}

Recall that the QR decomposition (\cite{computations}) of a complex matrix \(M\) is of the form \[M = QR\] where \(Q\) is a unitary matrix and \(R\) is an upper triangular matrix.

Extend the above to double matrices to arrive at \[[A,B] = [C,C^{-1}] [U,L].\] We get \[A = CU, B = LC^{-1},\] and therefore that \(BA = LU\). In other words, the analogue of QR decomposition over double matrices yields the LU decomposition of the product of two real matrices.

\hypertarget{algorithms}{%
\section{Algorithms}\label{algorithms}}

In this section, we demonstrate how the standard algorithms for LDL decomposition, QR decomposition, and Singular Value Decomposition generalise to double matrices.

\hypertarget{ldl-decomposition}{%
\subsection{LDL decomposition}\label{ldl-decomposition}}

The following relations (\cite{computations}) can be used to compute the LDL decomposition of a complex matrix:

\[\begin{aligned}
D_j &= A_{jj} - \sum_{k=1}^{j-1} L_{jk}L_{jk}^* D_k, \\
L_{ij} &= \frac{1}{D_j} \left( A_{ij} - \sum_{k=1}^{j-1} L_{ik} L_{jk}^* D_k \right) \quad \text{for } i>j
\end{aligned}\]

These relations generalise to double matrices, and enable one to compute the LDL decomposition of a double matrix in the same way. In doing so, we derive an algorithm (equivalent to the standard one) for computing the LDU decomposition of a real matrix.

\hypertarget{singular-value-decomposition-1}{%
\subsection{Singular Value Decomposition}\label{singular-value-decomposition-1}}

There are multiple algorithms for computing the singular values of a matrix, which can usually be adapted for computing the singular vectors as well. Here, for simplicity's sake, we will focus on only computing the singular values. The simplest such algorithm is the following (\cite{implicitcholesky}):

\begin{Shaded}
\begin{Highlighting}[]
\KeywordTok{def}\NormalTok{ svd(A, iters}\OperatorTok{=}\DecValTok{20}\NormalTok{):}
\NormalTok{    M }\OperatorTok{=}\NormalTok{ A.H }\OperatorTok{*}\NormalTok{ A}
    \ControlFlowTok{for}\NormalTok{ \_ }\KeywordTok{in} \BuiltInTok{range}\NormalTok{(iters):}
\NormalTok{        L, D }\OperatorTok{=}\NormalTok{ Matrix.LDLdecomposition(M)}
\NormalTok{        M }\OperatorTok{=}\NormalTok{ D }\OperatorTok{**}\NormalTok{ Rational(}\DecValTok{1}\NormalTok{,}\DecValTok{2}\NormalTok{) }\OperatorTok{*}\NormalTok{ L.H }\OperatorTok{*}\NormalTok{ L }\OperatorTok{*}\NormalTok{ D }\OperatorTok{**}\NormalTok{ Rational(}\DecValTok{1}\NormalTok{,}\DecValTok{2}\NormalTok{)}
    \ControlFlowTok{return}\NormalTok{ D }\OperatorTok{**}\NormalTok{ Rational(}\DecValTok{1}\NormalTok{,}\DecValTok{2}\NormalTok{)}
\end{Highlighting}
\end{Shaded}

This algorithm converges for all real matrices (\cite{lrproof}). A nearly verbatim generaisation to double-complex matrices is provided by figure \ref{splitcomplex-svdcode}.

It turns out that the resulting algorithm is equivalent to the classic LR algorithm (\cite{lrproof,computations}) for eigendecomposition. In that sense, we've started with a special case of the LR algorithm (for SVD) and automatically derived the general LR algorithm (for eigendecomposition of the product of two matrices). The general LR algorithm is rarely used in practice due to its numerical instability.

\hypertarget{qr-decomposition-1}{%
\subsection{QR decomposition}\label{qr-decomposition-1}}

There are multiple ways of computing the QR decomposition of a complex matrix. Two such ways are via the Gram-Schmidt process (\cite{computations}) or Householder reflections (\cite{computations}). We have implemented both procedures and observed that they generalise to double matrices. In doing this, we have found some new algorithms for computing the LU decomposition of the product of two matrices. First though, we begin by presenting a \emph{third} way to compute the QR decomposition of a double matrix.

\hypertarget{qr-algorithm-via-decomposition-into-real-components}{%
\subsubsection{QR algorithm via decomposition into real
components}\label{qr-algorithm-via-decomposition-into-real-components}}

To compute the QR decomposition of a double matrix \([A,B]\), one
can break it down into the problem of finding:

\begin{itemize}
\tightlist
\item
  \(L\) and \(U\) such that \(BA = LU\)
\item
  \(C\) such that \(A = C U\) and \(B = LC^{-1}\).
\item
  Explicitly finding \(C^{-1}\).
\end{itemize}

The output would then be the two matrices \(R = [U,L]\) and
\(Q = [C,C^{-1}].\)

This can be done by carrying out the following steps:

\begin{enumerate}
\def\labelenumi{\arabic{enumi}.}
\tightlist
\item
  Compute \(BA\). This costs time \(n^3\).
  \footnote{We measure algorithm complexity by counting flops, which are the number of multiplications, additions, subtractions, divisions and square roots.}
\item
  Find the LU decomposition of \(BA\): \(LU = BA\). This costs time
  \(2n^3/3\).
\item
  Find \(C\) via the equation \(A = CU\) using forward substitution.
  This costs time \(n^3/2\)
\item
  Do the same to find \(C^{-1}\) using the quation \(B = L C^{-1}\).
  This costs time \(n^3/2\).
\end{enumerate}

The overall running time is thus \(8n^3/3\). This is the same as that of
the Gram-Schmidt process and Householder method.

It bears pointing out that finding the double matrix
\(Q = [C,C^{-1}]\) is optional in the above algorithm.

\hypertarget{gram-schmidt}{%
\subsubsection{Gram-Schmidt}\label{gram-schmidt}}

Here, we show how to implement the Gram-Schmidt process for
double matrices.

The running time of this algorithm is simply double that of the
Gram-Schmidt process on real matrices. That is, it is \(8n^3/3\).

See figure \ref{gramschmidt-code} for an implementation.

It bears pointing out that finding the matrix \(R = [U,L]\) is optional
in the above algorithm.

\hypertarget{householder-method}{%
\subsubsection{Householder method}\label{householder-method}}

The QR decomposition can also be computed using Householder reflections,
suitably generalised to double vectors. The running time is twice
that of the usual algorithm over real matrices, and is therefore equal
to \(8n^3/3\).

See figure \ref{householdercode} for an implementation.

\section{The role of involutions in the double-complex numbers}

The automorphism group of the double-complex algebra (understood as an algebra over the real numbers) has eight elements. Six of those have order $2$, and can therefore be called \emph{involutions}. Involutions are central to the study of double-complex matrices because they can be used as a substitute or even generalisation of complex conjugation. Linear algebra essentially reduces to just five operations: Addition, subtraction, multiplication, division, and conjugation.

We now observe that four of those involutions result in a trivial theory. They are:
\begin{itemize}
  \item $(w,z) \mapsto (w,z)$
  \item $(w,z) \mapsto (\overline w,z)$
  \item $(w,z) \mapsto (w,\overline z)$
  \item $(w,z) \mapsto (\overline w,\overline z)$
\end{itemize}
To illustrate why they result in a trivial theory, we pick one of them, and hope that the reader understands that a similar issue is present in the other three. By $(w,z)$, we mean $w \frac{1+j}2 + z\frac{1j}2$ where $w$ and $z$ are complex numbers.

For the sake of argument, we will define $(w,z)^*$ to mean $(\overline w,\overline z)$. We can define $(A,B)$ to mean $(1,0) A + (0,1) B$ where $A$ and $B$ are arbitrary complex matrices. What we observe is that if $f(M)$ is some matrix operation defined on complex matrices $M$, we can generalise its definition to double-complex matrices by interpreting any complex conjugations in its definition to mean the operation $(w,z)^* = (\overline w, \overline z)$. We will then immediately have that $f((A,B)) = (f(A),f(B))$. For instance, if $f$ is the pseudoinverse of a matrix (its verbatim generalisation that uses this particular involution) then we have that this operation always distributes over the components $A$ and $B$. The same thing is true if $f$ is the SVD or the LU decomposition, or we would argue \emph{anything}. There is therefore nothing to study.

We now give the only two involutions which are of interest. They are:
\begin{itemize}
  \item $(w,z) \mapsto (z,w)$
  \item $(w,z) \mapsto (\overline z, \overline w)$
\end{itemize}
We claim that these result in an identical theory to eachother. To understand why, we endeavour to define an operation $(A,B) \mapsto [A,B]$ on pairs of complex matrices, mapping to the double-complex matrices bijectively, that satisfies:
$$\begin{aligned}
[A,B] + [C,D] &= [A+C,B+D] \\
[A,B] \times [C,D] &= [AC, DB] \\
[A,B]^* &= [B,A]
\end{aligned}$$
The third identity involves an involution, so our definition of $[A,B]$ will change along with the involution. If the involution is defined as $(w,z) \mapsto (z,w)$, then we can define $[A,B]$ to mean $A \frac{1+j}2 + B^T\frac{1j}2$ as we have chosen to do in this paper. This then causes the above three identities to all be satisfied. Likewise, if we define our involution to be $(w,z) \mapsto (\overline z, \overline w)$, then we can define $[A,B]$ to mean $A \frac{1+j}2 + \overline B^T \frac{1-j}2$. Once again, the above three identities get satisfied.

So what's the difference between these two involutions? It is about which double-complex matrices $[A,B]$ are understood to be the complex ones. One approach makes it so that the double-complex matrices of the form $[A,A^T]$ are taken to be the complex ones. The other approach makes it so that double-complex matrices of the form $[A,A^*]$ are taken to be the complex ones. Since in this paper, we treat double-complex matrices as pairs of complex matrices, we don't ultimately care which ones are taken to be complex. By accident, we made it so that the complex matrices are those of the form $[A,A^T]$.

Observe that given either definition of $[A,B]$, the claim that a double-complex matrix is Hermitian precisely when it satisfies $A = B$ is still true. All the other characterisations of the different families of matrices given in section \ref{sec:families} stay the same.

The convention that matrices of the form $[A,\overline A^T]$ are the complex one (which is the one we didn't pick) is arguably slightly better. It results in the notions of unitary or Hermitian double-complex matrices being direct generalisations of their complex counterparts. For that reason, in future papers, this convention might be a slightly better one to adopt.

Given that it turns out that our notion of Hermitian matrix is not a direct generalisation of the one over complex matrices, what does it generalise? It turns out that it includes all the \emph{complex symmetric} matrices, which are those complex matrices which satisfy $A = A^T$. Our notion of unitary matrix generalises the set of \emph{complex orthogonal} matrices, which are those that satisfy $A^T A = AA^T = I$. This all follows from section \ref{sec:families}. Perhaps this suggests that complex orthogonality and unitarity are on equal footings with eachother.

\hypertarget{jordan-svd}{%
\section{Jordan SVD}\label{jordan-svd}}

We will begin by doing what we did in section \ref{simple-matrix-decompositions-over-the-double-numbers}, but for the polar decomposition. The polar decomposition of a square complex matrix $M$ is a decomposition of the form $M = UP$ where $U$ is unitary and $P$ is Hermitian.

Generalising the polar decomposition to the double-complex matrices gives us:
$$[A,B] = [P,P^{-1}][C,C]$$
which expands into:
$$\begin{aligned}A &= PC \\ B &= CP^{-1} \end{aligned}$$
This implies that:
$$\begin{aligned}BA &= C^2 \\ AB &= P C^2 P^{-1} \end{aligned}$$
We see that the double-complex polar decomposition of $[A,B]$ gives us a complex matrix $\sqrt{BA}$ and a matrix $P$ such that $P\sqrt{BA}P^{-1} = \sqrt{AB}$.

In this section, we will show that such a decomposition of $[A,B]$ exists whenever $A$ and $B$ are non-singular. The problem of determining precisely when a polar decomposition exists (in the singular case) remains an open problem.

Recall that over the complex numbers, SVD is equivalent to polar decomposition. Given the derivation of double-complex SVD in section \ref{simple-matrix-decompositions-over-the-double-numbers}, it follows that there are non-singular double-complex matrices which don't have SVDs. This follows from the fact that we need \(AB\) and \(BA\) to be diagonalisable. Due to this, the polar decomposition is not equivalent to SVD over double-complex matrices. We restore the equivalence of polar decomposition and SVD by defining a more sophisticated generalisation of the SVD to double-complex matrices called the Jordan SVD, which borrows features from the Jordan Normal Form. We proceed to show that it has uses in non-Euclidean geometry (section \ref{using-the-jordan-svd-to-challenge-a-claim-made-in-yagloms-complex-numbers-in-geometry}), the computation of Moore-Penrose pseudoinverses of double-complex matrices (section \ref{pseudoinverse-and-its-relation-to-jordan-svd}), and in generalising some well-known or obscure matrix decompositions (section \ref{jordan-svd-generalises}).

\begin{definition}[Jordan SVD]
The Jordan SVD of a double-complex matrix $M$ is a factorisation of the form $M = U [J,J] V^*$ where $J$ is a complex Jordan matrix, and $U$ and $V$ are unitary double-complex matrices.
\end{definition}

\begin{definition}[Polar decomposition] The \emph{polar decomposition} of a double-complex matrix $M$ is a factorisation of the form $M = UP$ where $U$ is unitary and $P$ is Hermitian. \end{definition}

\begin{theorem} A double-complex matrix $M$ has a Jordan SVD if and only if it has a polar decomposition. \end{theorem}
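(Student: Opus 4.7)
The plan is to use the characterisations from section \ref{sec:families} to translate the problem into a statement purely about the complex matrix underlying the Hermitian factor. Recall that a double-complex matrix is Hermitian exactly when it is of the form $[A,A]$, and unitary exactly when it is of the form $[S,S^{-1}]$. The theorem will follow by a simple regrouping in one direction, and by invoking the existence of the Jordan normal form in the other.

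For the forward direction, suppose $M = U_1 [J,J] V^*$ is a Jordan SVD. I would set $U := U_1 V^*$ and $P := V [J,J] V^*$. Then $U$ is unitary because the unitary double-complex matrices are closed under products, and $P$ is Hermitian by direct verification: $P^* = V [J,J]^* V^* = V[J,J] V^* = P$ since $[J,J]^* = [J,J]$. Thus $M = UP$ is a polar decomposition.

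For the converse, suppose $M = UP$ is a polar decomposition. Write $P = [A,A]$, and take the Jordan decomposition $A = SJS^{-1}$ of the complex matrix $A$, which exists for every square complex matrix. Let $V := [S, S^{-1}]$, which is unitary by section \ref{sec:families}. Using the multiplication rule $[X,Y][Z,W] = [XZ, WY]$, I would verify by direct computation that
\[
V [J,J] V^* = [S,S^{-1}][J,J][S^{-1},S] = [SJ, JS^{-1}][S^{-1},S] = [SJS^{-1}, SJS^{-1}] = [A,A] = P.
\]
Setting $U_1 := UV$ (which is again unitary), we obtain $M = UP = U_1 [J,J] V^*$, which is a Jordan SVD.

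There is no real obstacle here: the proof is essentially a bookkeeping exercise once one realises that reducing $P = [A,A]$ to the form $V[J,J]V^*$ amounts precisely to conjugating $A$ into Jordan form via $S$, and that $V = [S, S^{-1}]$ is automatically unitary without any metric or positivity constraints on $S$. The one thing worth emphasising is that the absence of positivity or diagonalisability constraints is precisely what makes the double-complex setting more permissive than the classical complex one, and it is what allows Jordan SVD and polar decomposition to be equivalent even though ordinary SVD and polar decomposition are not.
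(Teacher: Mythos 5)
Your proof is correct and matches the paper's own argument essentially verbatim: the forward direction regroups $U_1[J,J]V^*$ as $(U_1V^*)(V[J,J]V^*)$, and the converse applies the Jordan decomposition of the complex matrix $A$ in $P=[A,A]$ and conjugates by the unitary $[S,S^{-1}]$. The explicit verification that $V[J,J]V^*=[A,A]$ via the multiplication rule is a nice touch the paper leaves implicit, but the route is the same.
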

\begin{proof}
Assume that $M$ has a Jordan SVD. Write it as $W[J,J]V^*$. Let $U = WV^*$ and $P = V[J,J]V^*$. Observe that $UP$ is a polar decomposition of $M$.

We now prove the converse. Assume that $M$ has a polar decomposition $UP$. Write $P$ as $[A,A]$. $A$ has a Jordan decomposition $QJQ^{-1}$. We thus have that $M = U[Q,Q^{-1}][J,J][Q,Q^{-1}]^*$, which is a Jordan SVD where the three factors are $U[Q,Q^{-1}]$, $[J,J]$ and $[Q,Q^{-1}]^*$.
\end{proof}

Now we show that unlike the SVD, the Jordan SVD / polar decomposition always exists for non-singular matrices.

\begin{theorem}[Existence of Jordan SVD] \label{existence-jordan-svd}
If a double-complex matrix $M = [A,B]$ is invertible, then it has a Jordan SVD.
\end{theorem}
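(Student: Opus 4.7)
The plan is to use the preceding equivalence theorem and reduce the problem to producing a polar decomposition of $M$. Writing a putative polar decomposition as $[A,B] = [S,S^{-1}][C,C]$ and expanding with the rule $[X,Y][Z,W] = [XZ,WY]$ gives $A = SC$ and $B = CS^{-1}$, from which $BA = C^2$. Hence everything comes down to finding a complex square root $C$ of $BA$ and then setting $S := AC^{-1}$. A preliminary observation: invertibility of $M = [A,B]$ forces $A$ and $B$ individually to be invertible, since any inverse $[C,D]$ must satisfy $[AC,DB] = [I,I]$; in particular $BA$ is an invertible complex matrix and $0$ is not in its spectrum.

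Next I would invoke the classical fact that every invertible complex matrix admits a square root. The cleanest justification is via the Jordan normal form: write $BA = RJR^{-1}$, and for each Jordan block $J_\lambda$ with $\lambda \neq 0$ construct $\sqrt{J_\lambda}$ using the truncated power series of $\sqrt{\lambda + t}$ applied to the nilpotent part of $J_\lambda$. Assembling the block square roots and conjugating back by $R$ yields a matrix $C$ with $C^2 = BA$; invertibility of $C$ is automatic because $BA$ is invertible.

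Finally I would define $S := AC^{-1}$ and verify the factorisation. Since $S$ is invertible, $[S,S^{-1}]$ is a legitimate unitary double-complex matrix, and $[C,C]$ is Hermitian by construction. The identity $A = SC$ is immediate, and $B = CS^{-1}$ follows from the computation $CS^{-1} = C \cdot C A^{-1} = C^2 A^{-1} = (BA)A^{-1} = B$. Applying the previous theorem then converts this polar decomposition into a Jordan SVD of $M$. The only step with any genuine content is the square-root construction, so that is where I expect the main obstacle to lie; everything else is algebraic bookkeeping forced by the multiplication rule for $[\cdot,\cdot]$.
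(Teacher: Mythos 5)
Your proof is correct and rests on the same key fact as the paper's: invertibility of $M = [A,B]$ forces $A$ and $B$ to be invertible, so the product ($BA$ in your version, $AB$ in the paper's) has a complex square root, from which the factorisation follows by routine algebra. The only cosmetic difference is that you package the construction as a polar decomposition and then invoke the preceding equivalence theorem, whereas the paper assembles the Jordan SVD directly from the Jordan decomposition of $\sqrt{AB}$; the mathematical content is the same.
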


\begin{proof}
Since $M$ is invertible, so are $A$ and $B$. Since $A$ and $B$ are invertible, we have that $\sqrt{AB}$ exists. Consider the Jordan decomposition of $\sqrt{AB}$, which we will write as $PJP^{-1}$. We have that $AB = P J^2 P^{-1}$. Let $Q = A^{-1} P J$. We can therefore express $A$ in two ways: $A = PJQ^{-1} = P J^2 P^{-1} B^{-1}$. Cancelling $P$ and $J$ gives $Q^{-1} = J P^{-1} B^{-1}$. Rearranging gives $B = Q J P^{-1}$.

We can therefore express $A$ and $B$ as $PJQ^{-1}$ and $QJP^{-1}$ respectively. Let $U = [P,P^{-1}]$ and $V = [Q,Q^{-1}]$. We have that $M = U [J,J] V^*$, as claimed.
\end{proof}

By the \emph{half-plane}, we mean the set of all those complex numbers
which have real part greater than zero, together with all complex
numbers whose real part equals zero but whose imaginary part is
non-negative. Denote the half-plane as \(H\). In symbols, we have that
\(H = \{z \in \mathbb C : \Re(z) > 0\} \cup \{ix : x \in \mathbb R, x \geq 0\}\).

We aim to show now that as long as a Jordan SVD of a matrix exists, then
there exists a unique Jordan SVD in which all the eigenvalues of \(J\)
are on the half-plane (up to permutation of the Jordan blocks of \(J\)).

\begin{lemma} \label{existence-half-plane} If a matrix $M = [A,B]$ has a Jordan SVD $U [J,J] V^*$, then it has a Jordan SVD $U' [J',J'] (V')^*$ where each element on the diagonal of $J'$ belongs to the half-plane. \end{lemma}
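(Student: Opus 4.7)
The plan is to take the given Jordan SVD $U[J,J]V^*$ and modify it one Jordan block of $J$ at a time, flipping the sign of the eigenvalue of any block whose eigenvalue lies outside $H$. Since for every $\lambda \in \mathbb{C}$ at least one of $\lambda$ and $-\lambda$ lies in $H$ (with $\lambda = 0$ trivially belonging to $H$), producing a Jordan SVD with all diagonal entries of the Jordan matrix in $H$ reduces to showing that a single Jordan block of $J$ can be replaced by the Jordan block with negated eigenvalue, at the cost of absorbing unitary factors into $U$ and $V^*$.

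To flip a single block, I would fix a Jordan block of $J$ with eigenvalue $\lambda$ and size $k$, and write the standard Jordan block as $J_\lambda = \lambda I_k + N_k$, where $N_k$ is the standard nilpotent shift. Let $D = \mathrm{diag}(1, -1, 1, \ldots, (-1)^{k-1})$; the underlying linear-algebra fact is $D^{-1} N_k D = -N_k$, which yields $D^{-1} J_{-\lambda} D = -J_\lambda$. I would extend $D$ to a matrix $\tilde D$ that is the identity on all other Jordan blocks of $J$, and let $\epsilon$ be the block-diagonal matrix equal to $-I_k$ on the chosen block and the identity elsewhere. Let $J'$ denote $J$ with the chosen Jordan block replaced by $J_{-\lambda}$. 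A direct computation then gives the identity $\tilde D^{-1} J' \tilde D \cdot \epsilon = \epsilon \cdot \tilde D^{-1} J' \tilde D = J$.

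Using this and the rule $[A,B][C,D] = [AC, DB]$, I would verify the factorization
$$[J,J] = [\tilde D^{-1}, \tilde D]\,[J', J']\,[\tilde D, \tilde D^{-1}]\,[\epsilon, \epsilon].$$
Each outer factor is unitary: $[\tilde D^{-1}, \tilde D]$ and $[\tilde D, \tilde D^{-1}]$ are of the form $[X, X^{-1}]$, and $[\epsilon, \epsilon]$ is unitary because $\epsilon = \epsilon^{-1}$. Absorbing $[\tilde D^{-1}, \tilde D]$ into $U$ on the left, and $[\tilde D, \tilde D^{-1}][\epsilon, \epsilon]$ into $V^*$ on the right, then yields a new Jordan SVD $U'[J', J'](V')^*$ in which the chosen block now has eigenvalue $-\lambda$. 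Iterating this one-block flip over every Jordan block whose eigenvalue lies outside $H$ produces the required Jordan SVD.

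The hard part will be arriving at the factorization displayed above, because the multiplication rule $[A,B][C,D] = [AC, DB]$ reverses the order of the second component. This forces the outer factors to be asymmetric (of the form $[\tilde D^{-1}, \tilde D]$ with $\tilde D \neq \tilde D^{-1}$) rather than symmetric; with symmetric factors one cannot realise a genuine conjugation of $J'$ in both components simultaneously. Once this bookkeeping is sorted out and the similarity $D^{-1} J_{-\lambda} D = -J_\lambda$ is in hand, the argument is essentially mechanical.
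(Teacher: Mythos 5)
Your proof is correct and follows essentially the same route as the paper's: insert a unitary sign-flip factor $[\epsilon,\epsilon]$ to negate the offending block's eigenvalue, then repair the Jordan form by conjugating with a unitary factor of the shape $[P,P^{-1}]$ and absorb everything into $U$ and $V^*$. The only difference is cosmetic: you exhibit the conjugating matrix explicitly as the alternating-sign diagonal $\tilde D$ (using $D^{-1}N_kD=-N_k$), whereas the paper obtains it abstractly as the Jordan decomposition of $\epsilon J$.
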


\begin{proof}

Write $J = J_1 \oplus J_2 \oplus \dotsc \oplus J_k$ where each $J_i$ denotes some Jordan block of $J$. Call the corresponding eigenvalues $\lambda_1, \lambda_2, \dotsc, \lambda_k$. Assume for the sake of illustration that $\Re(\lambda_1) < 0$ and for the rest we have $\Re(\lambda_i) > 0$. Furthermore, assume that $J_1$ has dimensions $m \times m$ for some $m$. We observe that $(-I_{m}) \oplus I_{n-m}$ multiplied by $J$ changes $J_1$ to a non-negative block. We also observe that $U ((-I_{m}) \oplus I_{n-m})$ is unitary. We thus have that $M$ is equal to $U' [J',J'] V^*$ where $U' = U ((-I_{m}) \oplus I_{n-m})$ and $J' = ((-I_{m}) \oplus I_{n-m}) J$. We have that $J'$ is not necessarily a Jordan matrix, but is still similar to a Jordan matrix which has the same eigenvalues as it. We take the Jordan decomposition of $J'$ to get $J' = P J'' P^{-1}$. We thus have that $[J',J'] = [PJ''P^{-1}.PJ''P^{-1}] = [P,P^{-1}] [J'',J''] [P,P^{-1}]^*$. Putting it all together we have that $M$ can be expressed as $U'' [J'',J''] (V')^*$ where $U'' = U' [P^{-1}, P]$, and $J''$ is defined as earlier, and $V' = V [P,P^{-1}]$.
\end{proof}

The following lemma characterises the possible Jordan Normal Forms of
the square roots of any invertible complex matrix.

\begin{lemma} \label{jnf-square-root} Let $M$ be an invertible complex matrix. Consider its Jordan Normal Form $J = J_{k_1}(\lambda_1) \oplus J_{k_2}(\lambda_2) \oplus \dotsb \oplus J_{k_n}(\lambda_n)$, where $n$ is the number of Jordan blocks in $J$, and $k_i$ denotes the size of the $i$th Jordan block. The Jordan Normal Form of any square root of $M$ is of the form $J_{k_{\sigma(1)}}(\pm \sqrt {\lambda_{\sigma(1)}}) \oplus J_{k_{\sigma(2)}}(\pm\sqrt{\lambda_{\sigma(2)}}) \oplus \dotsb \oplus J_{k_{\sigma(n)}}(\pm\sqrt{\lambda_{\sigma(n)}})$ where $\sigma$ is some permutation on $\{1,2,\dotsc,n\}$. \end{lemma}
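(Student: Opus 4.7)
The plan is to use the fact that any square root $S$ of $M$ commutes with $M$, decompose the space by generalized eigenspaces, and then show that squaring preserves Jordan block sizes within each single-eigenvalue piece, exploiting that all eigenvalues of $M$ are nonzero.

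First I would note that $SM = S\cdot S^2 = S^3 = MS$, so $S$ preserves each generalized eigenspace of $M$, and conversely each generalized eigenspace $V_\mu(S)$ lies inside $V_{\mu^2}(M)$. Because $M$ is invertible, every eigenvalue $\lambda$ of $M$ is nonzero, so the two square roots $+\sqrt{\lambda}$ and $-\sqrt{\lambda}$ are distinct eigenvalues of $S$ (if both occur), and $V_\lambda(M) = V_{\sqrt{\lambda}}(S) \oplus V_{-\sqrt{\lambda}}(S)$. This reduces the problem to the following local claim: if $S$ is an operator on a finite-dimensional space with single eigenvalue $\mu \neq 0$, then $S$ and $S^2$ have the same multiset of Jordan block sizes.

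To establish the local claim, I would write $S = \mu I + E$ with $E$ nilpotent, so that $S^2 - \mu^2 I = E(2\mu I + E)$. The factor $2\mu I + E$ is invertible (since $2\mu \neq 0$ and $E$ is nilpotent) and commutes with $E$, so $(S^2 - \mu^2 I)^j = E^j (2\mu I + E)^j$ has the same rank as $E^j$ for every $j \geq 0$. Since the Jordan block size partition of $S$ at $\mu$ is determined by the sequence $\{\mathrm{rank}(E^j)\}$ and that of $S^2$ at $\mu^2$ by $\{\mathrm{rank}((S^2-\mu^2 I)^j)\}$, the two partitions coincide.

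Reassembling: for each eigenvalue $\lambda$ of $M$, the Jordan block sizes of $M$ on $V_\lambda(M)$ are the disjoint union of those of $S$ on $V_{+\sqrt{\lambda}}(S)$ and on $V_{-\sqrt{\lambda}}(S)$, each paired with eigenvalue $+\sqrt{\lambda}$ or $-\sqrt{\lambda}$ respectively. Thus every Jordan block $J_{k_i}(\lambda_i)$ of $M$ corresponds to a unique Jordan block of $S$ of the same size $k_i$ with eigenvalue $\pm\sqrt{\lambda_i}$, yielding the stated form after choosing $\sigma$ to be the permutation that matches blocks of $S$ to blocks of $M$.

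The main obstacle is the rank identity $\mathrm{rank}((S^2 - \mu^2 I)^j) = \mathrm{rank}(E^j)$ together with the bookkeeping when $M$ has several blocks sharing the same eigenvalue $\lambda$; both issues are handled cleanly because the assumption that $M$ is invertible forces $2\mu \neq 0$, which is precisely what makes $2\mu I + E$ invertible and forces $+\sqrt{\lambda}$ and $-\sqrt{\lambda}$ to be distinct eigenvalues of $S$.
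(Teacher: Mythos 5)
Your proof is correct and rests on the same key fact as the paper's: that for an invertible operator, squaring preserves Jordan block sizes while squaring the eigenvalues. The paper simply asserts that the Jordan Normal Form of $J_{\ell}(\mu)^2$ is $J_{\ell}(\mu^2)$ and then concludes by uniqueness of the Jordan form of $M = QK^2Q^{-1}$, whereas you actually prove that fact via the rank identity $(S^2-\mu^2 I)^j = E^j(2\mu I+E)^j$ on each generalized eigenspace --- a worthwhile supplement, since that is precisely the step the paper leaves unjustified.
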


\begin{proof}

We have that $M = P J P^{-1}$ where $J$ is some Jordan matrix. We have that either $J = J_{k_1}(\lambda_1) \oplus J_{k_2}(\lambda_2) \oplus \dotsb \oplus J_{k_n}(\lambda_n)$.

Consider some arbitrary square root of $M$, and denote it as $\sqrt M$. Consider the Jordan decomposition of $\sqrt M$, which we shall write as $Q K Q^{-1}$. We have that $K = J_{\ell_1}(\mu_1) \oplus J_{\ell_2}(\mu_2) \oplus \dotsb \oplus J_{\ell_m}(\mu_m)$ where $m$ is the number of Jordan blocks in $K$, and $\ell_i$ denotes the size of the $i$th Jordan block of $K$. We have that $M = Q K^2 Q^{-1}$. Consider the Jordan decomposition of $K^2$, which we will write as $R K' R^{-1}$. We get that $M = (QR) K' (QR)^{-1}$. Since the Jordan Normal Form of $J_{\ell_i}(\mu_i)^2$ is $J_{\ell_i}(\mu_i^2)$, we have that $K' = J_{\ell_1}(\mu_1^2) \oplus J_{\ell_2}(\mu_2^2) \oplus \dotsb \oplus J_{\ell_m}(\mu_m^2)$. Since both $K'$ and $J$ are the Jordan Normal Forms of $M$, we have that $m = n$, and there must exist a permutation $\sigma$ such that $\mu_i^2 = \lambda_{\sigma(i)}$ and $\ell_i = k_{\sigma(i)}$. The conclusion follows.
\end{proof}

We now prove that the Jordan SVD is unique as long as the eigenvalues of
\(J\) are all on the half-plane.

\begin{theorem} \label{existence-and-uniqueness-jsvd} An invertible double-complex matrix $M = [A,B]$ always has a Jordan SVD $U[J,J]V^*$ where the eigenvalues of $J$ are on the half-plane, and this is unique. \end{theorem}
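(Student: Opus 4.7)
The plan is to handle existence and uniqueness separately, with existence being a short corollary of earlier results and uniqueness being the substantive part. I interpret the uniqueness claim as asserting that $J$ is determined up to permutation of its Jordan blocks; the unitary factors $U$ and $V$ cannot be unique when $J$ has repeated eigenvalues, just as with the ordinary SVD.

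For existence, I would simply glue the two earlier results: Theorem~\ref{existence-jordan-svd} produces some Jordan SVD of the invertible $M$, and Lemma~\ref{existence-half-plane} then allows me to modify that Jordan SVD, one offending block at a time, so that every eigenvalue of the middle factor lands in $H$. No further work is needed in this direction.

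For uniqueness, my plan is to read off from any Jordan SVD $M = U[J,J]V^*$ that $J^2$ is similar to $AB$, and then invoke Lemma~\ref{jnf-square-root}. Writing $U = [P,P^{-1}]$ and $V = [Q,Q^{-1}]$ and expanding $M = [A,B]$ exactly as in the proof of Theorem~\ref{existence-jordan-svd}, I get $A = PJQ^{-1}$ and $B = QJP^{-1}$, hence $AB = PJ^2P^{-1}$. So $J$ is a square root of a matrix whose Jordan Normal Form equals that of $AB$. Lemma~\ref{jnf-square-root} then constrains $J$ almost completely: its blocks must match the blocks of the JNF of $AB$ in size, up to some permutation $\sigma$, and each block's eigenvalue must be one of the two square roots of the corresponding eigenvalue of $AB$.

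The step I expect to be the main obstacle, though a small one, is showing that the half-plane condition pins down a unique sign choice, thereby eliminating the remaining ambiguity from Lemma~\ref{jnf-square-root}. Concretely, I need that for every nonzero $\lambda \in \mathbb{C}$ exactly one of $\pm\sqrt{\lambda}$ belongs to $H$. I plan to prove this by a short case split on $\Re(\sqrt{\lambda})$: if the real part is nonzero, the two square roots lie in opposite open half-planes and exactly one is in $H$; if the real part vanishes then $\sqrt{\lambda}$ is a nonzero pure imaginary number (nonzero because invertibility of $M$ forces $A$, $B$, and hence $AB$ to be invertible), so exactly one of $\pm\sqrt{\lambda}$ has non-negative imaginary part. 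Since all eigenvalues of $AB$ are nonzero, this selection rule fixes a unique eigenvalue for each block of $J$, and together with the size constraint it determines $J$ up to permutation of its Jordan blocks, as required.
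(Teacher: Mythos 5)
Your proposal is correct and follows essentially the same route as the paper: existence by combining Theorem~\ref{existence-jordan-svd} with Lemma~\ref{existence-half-plane}, and uniqueness by observing $AB = PJ^2P^{-1}$, invoking Lemma~\ref{jnf-square-root} to constrain the blocks of $J$, and noting that for each nonzero eigenvalue exactly one of the two square roots lies in $H$. Your explicit case analysis for the sign selection matches the paper's, and your reading of "unique" as uniqueness of $J$ up to permutation of Jordan blocks is the one the paper's proof actually establishes.
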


\begin{proof}

By theorem \ref{existence-jordan-svd}, $M$ has a Jordan SVD $U [J,J] V^*$.

Let $J = J_{k_1}(\lambda_1) \oplus J_{k_2}(\lambda_2) \oplus \dotsb \oplus J_{k_n}(\lambda_n)$. We have that $AB = P J^2 P^{-1}$. Let $K$ be the Jordan Normal Form of $AB$. $K$ must also the Jordan Normal Form of $J^2$, so we get that $K = J_{k_1}(\lambda_1^2) \oplus J_{k_2}(\lambda_2^2) \oplus \dotsb \oplus J_{k_n}(\lambda_n^2)$. Consider any other Jordan SVD of $M$, which we will write as $[P',(P')^{-1}] [J',J'] (V')^*$. We have that $J'$ must be the Jordan Normal Form of some square root of $AB$, namely the one that's equal to $P' J' (P')^{-1}$. It then follows from lemma \ref{jnf-square-root} and the form of $K$ that $J' = J_{k_{\sigma(1)}}(\pm\lambda_{\sigma(1)}) \oplus J_{k_{\sigma(2)}}(\pm\lambda_{\sigma(2)}) \oplus \dotsb \oplus J_{k_{\sigma(n)}}(\pm\lambda_{\sigma(n)})$ (where $\sigma$ is some permutation on $\{1,2,\dotsc,n\}$). For each $i$, our choice of plus or minus is fully determined by our desire for $\pm\lambda_{\sigma(i)}$ to land on the half-plane. In detail: If the real part of $\lambda_{\sigma(i)}$ is negative, then we pick the minus option to make the real part positive. If the real part of $\lambda_{\sigma(i)}$ is positive, then we pick the plus option to make the real part positive. The remaining case is where $\Re(\lambda_{\sigma(i)}) = 0$; in that case, pick plus or minus depending on whichever one lands in the halfplane.

From the above paragraph, we conclude that there exists at most one value of $J'$ (up to permutation of the Jordan blocks) in which all the eigenvalues are on the half-plane. By lemma \ref{existence-half-plane}, this value of $J'$ exists. We are done.
\end{proof}

We finish this section by giving the Sympy code for computing the Jordan
SVD under the assumption that a double-complex matrix is invertible:

\begin{Shaded}
\begin{Highlighting}[]
\KeywordTok{def}\NormalTok{ jordan\_svd(M):}
\NormalTok{    A, B }\OperatorTok{=}\NormalTok{ first\_part(M), second\_part(M.T)}
\NormalTok{    P, J }\OperatorTok{=}\NormalTok{ Matrix.jordan\_form((A }\OperatorTok{*}\NormalTok{ B) }\OperatorTok{**}\NormalTok{ Rational(}\DecValTok{1}\NormalTok{,}\DecValTok{2}\NormalTok{))}
\NormalTok{    Q }\OperatorTok{=}\NormalTok{ A }\OperatorTok{**} \OperatorTok{{-}}\DecValTok{1} \OperatorTok{*}\NormalTok{ P }\OperatorTok{*}\NormalTok{ J}
\NormalTok{    U }\OperatorTok{=}\NormalTok{ P }\OperatorTok{*}\NormalTok{ (}\DecValTok{1} \OperatorTok{+}\NormalTok{ j)}\OperatorTok{/}\DecValTok{2} \OperatorTok{+}\NormalTok{ P.T }\OperatorTok{**} \OperatorTok{{-}}\DecValTok{1} \OperatorTok{*}\NormalTok{ (}\DecValTok{1} \OperatorTok{{-}}\NormalTok{ j)}\OperatorTok{/}\DecValTok{2}
\NormalTok{    V }\OperatorTok{=}\NormalTok{ Q }\OperatorTok{*}\NormalTok{ (}\DecValTok{1} \OperatorTok{+}\NormalTok{ j)}\OperatorTok{/}\DecValTok{2} \OperatorTok{+}\NormalTok{ Q.T }\OperatorTok{**} \OperatorTok{{-}}\DecValTok{1} \OperatorTok{*}\NormalTok{ (}\DecValTok{1} \OperatorTok{{-}}\NormalTok{ j)}\OperatorTok{/}\DecValTok{2}
\NormalTok{    S }\OperatorTok{=}\NormalTok{ J }\OperatorTok{*}\NormalTok{ (}\DecValTok{1} \OperatorTok{+}\NormalTok{ j)}\OperatorTok{/}\DecValTok{2} \OperatorTok{+}\NormalTok{ J.T }\OperatorTok{*}\NormalTok{ (}\DecValTok{1} \OperatorTok{{-}}\NormalTok{ j)}\OperatorTok{/}\DecValTok{2}
    \ControlFlowTok{return}\NormalTok{ U, S, V}
\end{Highlighting}
\end{Shaded}

\subsection{As a generalisation of other matrix decompositions} \label{jordan-svd-generalises}

The Jordan SVD generalises the Jordan decomposition. Let $PJP^{-1}$ be the Jordan decomposition of a complex matrix $A$. We then have that $[A,A] = U [J,J] V^*$ where $U = V = [P,P^{-1}]$. Note that $[A,A]$ is the general form of a double-complex Hermitian matrix.

Observe that the Jordan SVD generalises the ordinary SVD of a complex matrix. Let $UDV^*$ be the SVD of a complex matrix $A$. A Jordan SVD of $[A,\overline A^T]$ is then $[U,\overline U^T][D,D][V,\overline V^T]^*$.

Observe that the double-complex polar decomposition generalises the so-called \emph{Algebraic Polar Decomposition} (\cite{algebraicpolar,kaplansky}) of complex matrices. The Algebraic Polar Decomposition of a complex matrix $A$ is a factorisation of the form $A = UP$ where $U$ satisfies $UU^T = I$ and $P$ satisfies $P = P^T$. Given $M = [A,A^T]$, we have that a polar decomposition of $M$ is of the form $[U,U^T] [P,P]$.

\hypertarget{double-jordan-svd-as-opposed-to-double-complex-jordan-svd}{%
\subsection{Double-number Jordan SVD as opposed to double-complex Jordan
SVD}\label{double-jordan-svd-as-opposed-to-double-complex-jordan-svd}}

Recall that there is an alternative to the Jordan Normal Form over real
matrices where Jordan blocks of the form
\(\begin{bmatrix}a + bi & 0 \\ 0 & a - bi\end{bmatrix}\) are changed to
blocks of the form \(\begin{bmatrix}a & -b \\ b & a\end{bmatrix}\). This
change results in a variant of the Jordan Normal Form that uses only
real numbers and not complex numbers. Using this variant of the JNF,
it's possible to define a variant of the Jordan SVD for double
matrices without using double-complex numbers. For the sake of completeness,
we define this variant now but don't use it later on.

\begin{definition}[Double-number Jordan SVD]
The double-number Jordan SVD of a double matrix $M$ is a factorisation of the form $M = U [J,J] V^*$ where $J$ is a real Jordan matrix, and $U$ and $V$ are unitary double matrices.
\end{definition}

\hypertarget{using-the-jordan-svd-to-challenge-a-claim-made-in-yagloms-complex-numbers-in-geometry}{%
\section{\texorpdfstring{Using the Jordan SVD to challenge a claim made
in Yaglom's \emph{Complex Numbers in
Geometry}}{Using the Jordan SVD to challenge a claim made in Yaglom's Complex Numbers in Geometry}}\label{using-the-jordan-svd-to-challenge-a-claim-made-in-yagloms-complex-numbers-in-geometry}}

\hypertarget{introduction-to-complex-numbers-in-geometry}{%
\subsection{\texorpdfstring{Introduction to \emph{Complex Numbers in
Geometry}}{Introduction to Complex Numbers in Geometry}}\label{introduction-to-complex-numbers-in-geometry}}

We work with the English translation of Yaglom's \emph{Complex Numbers
in Geometry}, published in 1968. We will abbreviate this to \emph{CNG}.

The topic of CNG is \emph{linear fractional transformations} over
different number systems. A linear fractional transformation (from now
on, abbreviated to LFT) is a function of the form
\(z \mapsto \frac{az + b}{cz + d}\) where all variables \(a, b, c, d\)
and \(z\) belong to some number system (formally, some commutative
ring). Any such transformation can be represented using the
\(2 \times 2\) matrix \(\begin{bmatrix}a & b \\ c & d\end{bmatrix}\).
If a non-singular \(2 \times 2\) matrix \(M\) represents an LFT, then any
scalar multiple \(\lambda M\) of \(M\) represents the same LFT, as long
as \(\lambda\) is not a zero-divisor.

CNG considers three number systems: The complex numbers, dual numbers
and the double numbers. LFTs over complex numbers are well known.
They are usually called \emph{Moebius transformations}.

CNG provides some motivation for studying LFTs in the case of dual
numbers and double numbers, a topic which some might otherwise
find esoteric. To do this, it presents a geometric interpretation of
what elements in the dual numbers, double numbers and complex
numbers represent.

We will begin with the dual numbers, because it is the easiest and most
fundamental case. Naively, one might think that a dual number represents
a point on a plane; that is, the dual number \(x + \epsilon y\)
represents the point on the Cartesian plane with Cartesian coordinate
\((x,y)\). While this of course can work, CNG presents another
interpretation of what a dual number represents. In CNG, a dual number
is understood to represent a \emph{line} on the plane. Additionally,
this line is more than just a line, in that it is also \emph{oriented},
meaning that the line is pointing in one of two opposite directions.
This way, every line on the plane is represented by \emph{two} dual
numbers, depending on which of two orientations is given to the line.

A dual number of the form $\tan(\theta/2)(1 + \epsilon s)$ is understood to
represent a line whose $x$-intercept is $s$ and which makes an angle $\theta$
with the $x$ axis. Notice that if $\pi$ gets added to $\theta$, then the resulting
dual number is not the same, even though the line is the same. This shows that
the lines are oriented.

Strictly speaking, the set of dual numbers doesn't quite suffice to
express every line on the plane (because a line may not intersect the $x$ axis). This is a problem which can be fixed by extending the set of dual numbers with the addition of some infinite dual numbers. Those additional numbers are of the form \(\frac{1}{x\epsilon}\) where \(x\) can be an arbitrary real number. To understand this topic in complete generality, we suggest looking up \emph{homogeneous coordinates} and \emph{projective lines over rings}. Having made this change, as CNG does, we can now represent every line on the plane. Using homogeneous coordinates, a line which makes an angle $\theta$ with the $x$ axis, and which has a perpendicular distance of $R$ from the origin, can be represented by the point on the dual-number projective line with homogeneous coordinate $\left[\sin\left(\frac{\theta + \epsilon R} 2\right):\cos\left(\frac{\theta + \epsilon R} 2\right)\right]$.

The elements \(a, b, c, d\) in \(z \mapsto \frac{az + b}{cz + d}\) are
all ordinary dual numbers, while \(z\) can also take any value of the
form \(\frac{1}{x\epsilon}\).

CNG shows that the LFTs over the dual numbers contain all Euclidean
isometries. What this means is that all translations, rotations and
reflections can be expressed as LFTs over the dual numbers. Strictly
speaking, the reflections are always followed by \emph{orientation
reversals}, implying that while there is a subgroup of dual-number LFTs
that's isomorphic to the Euclidean group, the geometric interpretation
of this subgroup is somewhat inconsistent with how one might normally
imagine the Euclidean group. This subgroup is represented by the dual-number
equivalent of the unitary matrices.

The LFTs over the dual numbers contain some transformations which are
not Euclidean isometries. Because of this, Yaglom is motivated to
classify all possible LFTs over the dual numbers. In a paper by Gutin,
this classification is interpreted as a \emph{matrix decomposition} of
dual number matrices. Understood geometrically, matrix decompositions endeavour
to express arbitrary matrices as products of simpler matrices. This
is equivalent to describing a geometric transformation as a sequence of
simpler and sometimes more familiar transformations. Gutin shows that
Yaglom's decomposition is of the form \(USV^*\) where \(U\) and \(V\)
are unitary matrices over the dual numbers. All
of the exotic behaviours of the dual number LFTs are due to the
matrix \(S\). Gutin shows that \(S\) need only be of the form:

\begin{itemize}
\tightlist
\item
  \(\begin{bmatrix}a & 0 \\ 0 & b\end{bmatrix}\) where \(a\) and \(b\)
  are arbitrary real numbers.
\item
  \(\begin{bmatrix}a & -b\epsilon \\ b\epsilon & a \end{bmatrix}\) where
  \(a\) and \(b\) are arbitrary real numbers.
\end{itemize}

A similarity to Singular Value Decomposition is now obvious. Gutin goes
on to show that all matrices over the dual numbers can be expressed in a
similar way to the above (but generalised appropriately to
\(n \times n\) matrices for arbitrary \(n\), and to singular matries as
well). The above special case is only for \(2 \times 2\) matrices, and
only for invertible matrices. This special case is enough for CNG.

Note: If $S$ is of the form $\begin{bmatrix}a & -b\epsilon \\ b\epsilon & a \end{bmatrix}$ (which is the second case), then $S$ represents a transformation called an \emph{axial dilation}. Axial dilations commute with unitary matrices. As a result, we have that $USV^* = UV^* S$. In other words, if $S$ is an axial dilation then we can simplify the decomposition to $US$ instead of $USV^*$. In the axial dilation case, the book states the decomposition in this simpler way.

A question arises: What happens if the role of the dual numbers above is changed to the complex numbers? In that case, the complex numbers represent oriented lines in the \emph{elliptic plane} (the plane which elliptic geometry takes places over). This is in contrast to the dual numbers, which represent oriented lines in the Euclidean plane. The elliptic plane is essentially a sphere (but where antipodal points are identified), and the lines are thus great circles. An arbitrary great circle can be chosen to be the equator. The oriented great circle which intersects the equator at longitude $s$, and makes an angle $\theta$ with the equator at the point of intersection, can be represented by the complex number $\tan(\theta/2)(\cos(s) + i \sin(s))$. In the case where $\theta = \pi$ (where the line is the same as the equator, but oriented in the opposite direction to when $\theta = 0$) the oriented line is represented as $\infty$. Similar to the case of the dual numbers, the unitary matrices act as isometries of the elliptic plane. The set of transformations of the elliptic plane expressible as complex-number LFTs can be decomposed using Singular Value Decomposition of complex matrices, in a similar way to how we decomposed transformations of the Euclidean plane (expressible as dual-number LFTs) using an analogue of Singular Value Decomposition for dual-number matrices.

It's worth pointing out that CNG does \emph{not} interpret complex LFTs as transformations of the elliptic plane. We won't speculate on why the book omitted this. Instead, the book proposes some other interpretations of complex LFTs. In one such interpretation, the complex numbers are understood to represent \emph{oriented points} (oriented either clockwise or anti-clockwise) in the \emph{hyperbolic plane} (in the sense of hyperbolic geometry). We won't say any more about this.

Before we describe the double case, we must mention the fact that
there are three non-Euclidean metric geometries over the plane:
Euclidean geometry, elliptic geometry and hyperbolic geometry. The significance of these geometries lies in the fact that they satisfy four of the five postulates in Euclid's Elements (once they are formulated in a suitable manner) while the fifth postulate is contradicted by two of the three geometries. We've
related the dual numbers to Euclidean geometry and the complex numbers
to elliptic geometry. It's clear what comes next.

The double numbers represent oriented lines on the
\emph{hyperbolic plane}. Strictly speaking, one must projectively extend
the double numbers to represent all such lines, but the
preceeding claim is mostly correct. The LFTs over the double
numbers can be seen to contain a subgroup isomorphic to the group of all
isometries of the hyperbolic plane.

\hypertarget{a-challenge-to-yagloms-classification-of-lfts-over-the-double-numbers}{%
\subsection{A challenge to Yaglom's classification of LFTs over the
double
numbers}\label{a-challenge-to-yagloms-classification-of-lfts-over-the-double-numbers}}

Like in the dual number case, CNG endeavours to classify all possible
LFTs over the double numbers. We suspect that this classification
is not entirely correct.

Below, we provide the relevant quote from CNG, and do our best to
faithfully interpret it\footnote{But please see the last paragraph.}:

\begin{quote}
\emph{Each axial circular transformation of the Lobachevskii plane
represents a motion, or a motion together with an axial symmetry with
respect to some cycle \(S_1\) (axial inversion of the first, second, or
third kind), or a motion together with an axial symmetry with respect to
an equidistant curve and a reversion (that is, together with an axial
inversion of the fourth kind).}
\end{quote}

In our copy of the book, this can be found at the end of Section III,
just before the Appendix (page 194). By an \emph{axial circular
transformation of the Lobachevskii plane}, CNG means an LFT over the
double numbers. This is the term that the book uses in place of
double LFT. The term \emph{Lobachevskii plane} is used in some
books in place of hyperbolic plane.

A \emph{motion} is represented precisely by a unitary matrix over the
double numbers. CNG uses this to mean a hyperbolic isometry. We
make the daring choice to interpret CNG's classification as a
decomposition of the form \(U S V^*\) where \(U\) and \(V\) are unitary
matrices, and \(S\) is some matrix that CNG confines to only a few sets
of possibilities. This seems to be what CNG meant, because the resulting
set of possibilities for \(S\) is nearly exhaustive. Perhaps the choice
of \(U\) and \(V\) could be restricted depending on \(S\), but CNG's
classification has the best chance of being exhaustive if we allow \(U\)
and \(V\) to be chosen arbitrarily. Here are the possibilities CNG
allows for \(S\):

\begin{enumerate}
\def\labelenumi{\arabic{enumi}.}
\tightlist
\item
  By an axial inversion of the first kind, CNG means a matrix of the
  form \(\begin{bmatrix} 0 & -k \\ 1 & 0\end{bmatrix}\) where \(k\) is
  an arbitrary real number.
\item
  By an axial inversion of the second kind, CNG means a matrix of the
  form \(\begin{bmatrix}j & -1 \\ 3 & j\end{bmatrix}\). Oddly enough,
  this is a single matrix, and not an infinite family.
\item
  By an axial inversion of the third kind, CNG means a matrix of the
  form
  \(\begin{bmatrix}(1 - \alpha)j & 1 + \alpha \\ -(1 + \alpha) & (1 - \alpha)j \end{bmatrix}\),
  where \(\alpha\) is an arbitrary non-negative real number.
\item
  By an axial inversion of the fourth kind, CNG means a matrix of the
  form \(\begin{bmatrix}0 & kj \\ 1 & 0 \end{bmatrix}\).
\end{enumerate}

In this paper, we provide an alternative, and we would argue fairly
similar, decomposition of double matrices called the Jordan SVD.
We now proceed to use it.

If the above set of values for \(S\) is indeed exhaustive, then we can
check that claim by using the Jordan SVD. Recall that the Jordan SVD of
a double-complex matrix of the form \([A,B]\) is the decomposition
\([A,B] = U [J,J] V^*\) where \(U\) and \(V\) are unitary matrices over
the double-complex numbers and \(J\) is a complex Jordan matrix (the same as
in the Jordan Normal Form of a complex matrix). To check exhaustiveness,
we only need to check that all possible values of \(J\) are exhausted,
where \(J\) is a possible Jordan Normal Form of a real matrix. Here are
the matrices \(J\) for the above four possible sets of values of \(S\):

\begin{enumerate}
\def\labelenumi{\arabic{enumi}.}
\tightlist
\item
  \(\begin{bmatrix} k & 0 \\ 0 & 1\end{bmatrix}\) for some
  \(k \in \mathbb R\).
\item
  \(\begin{bmatrix} 2 & 1 \\ 0 & 2\end{bmatrix}\).
\item
  \(\left[\begin{matrix}\sqrt{- 2 i \alpha^{2} + 4 \alpha + 2 i} & 0\\0 & \sqrt{2 i \alpha^{2} + 4 \alpha - 2 i}\end{matrix}\right]\).
\item
  \(\begin{bmatrix}1 & 0 \\ 0 & ik \end{bmatrix}\).
\end{enumerate}

Recall that if a matrix \(M\) represents an LFT, then \(\lambda M\)
represents the same LFT as long as \(\lambda\) is not a zero divisor.
Taking this into account, we can generalise the above possibilities to:

\begin{enumerate}
\def\labelenumi{\arabic{enumi}.}
\tightlist
\item
  \(\begin{bmatrix} a & 0 \\ 0 & b\end{bmatrix}\) where \(a\) and \(b\)
  are arbitrary real numbers. This is obtained by taking case 1,
  substituting \(k = a/b\) and multiplying by \(\lambda = b\).
\item
  \(\begin{bmatrix} 2 & 1 \\ 0 & 2\end{bmatrix}\). Taking case 2 and
  scaling the matrix by some \(\lambda\) does not result in a Jordan
  matrix unless \(\lambda = 1\). This appears to be the downfall of
  CNG's claim that this is an exhaustive classification.
\item
  \(\begin{bmatrix} z & 0 \\ 0 & z^* \end{bmatrix}\). This can be
  obtained by taking case 3, substituting in the appropriate value of
  \(\alpha\) and multiplying by the appropriate \(\lambda\).
\item
  \(\begin{bmatrix} c\frac{1 - i}{\sqrt 2} & 0 \\ 0 & c\frac{1 + i}{\sqrt 2} \end{bmatrix}\),
  which is obtained by taking case 4, substituting \(k = 1\) and
  multiplying by \(c\frac{1 - i}{\sqrt 2}\).
\end{enumerate}

The LFT of the form \(\frac{2z+(1+j)}{(1-j)z + 2}\) doesn't appear to be
covered by the above classification. The corresponding matrix is
\([J,J]\) where \(J = \begin{bmatrix} 1 & 1 \\ 0 & 1 \end{bmatrix}\).
This matrix isn't covered by any of the four cases above. This suffices
to show that the classification in CNG is incomplete.

One proposal for completing CNG's classification is to replace case 2,
which CNG calls an \emph{axial inversion of the second kind} (of which
there is only one), with the infinite family of matrices of the form
\(\begin{bmatrix}k & 1 + j \\ 1 - j & k \end{bmatrix}\) where \(k\) is
an arbitrary real number. The only objection to this might be that CNG's
classification is of a geometric nature, and it's unclear what geometric
meaning such matrices have. Further work might be needed to find a
geometrically meaningful extension to case 2.

We must point out that we have slightly simplified matters above. This
was done for the sake of clarity, but not in a way that we think
undermines CNG's claim. When CNG studies double LFTs, it allows the use
of the conjugation operation: \(z \mapsto z^*\). Therefore the group
that it studies consists of transformations of the form
\(z \mapsto \frac{az + b}{cz + d}\) and
\(z \mapsto \frac{az^* + b}{cz^* + d}\). The latter transformations
don't admit a matrix representation. Furthermore, the four different
types of \emph{axial inversions} (as described in the book) are all transformations of the latter type, and therefore don't have matrix representations. This changes none
of our conclusions. Why? We interpret CNG's claim to be that every
``axial circular transformation of the Lobachevskii plane'' \(T\) is of
the form \(U \circ S \circ V\) where \(U\) is a motion, \(S\) is an
axial inversion of one of the four types, and \(V\) is another motion. Let
\(C\) denote conjugation. If \(T\) reverses orientations and
\(T = U \circ S \circ V\), we have that
\(T = U' \circ S' \circ V' \circ C\) where \(U'\) corresponds to a
unitary matrix, \(S'\) corresponds to one of the four types of matrices
we associated with axial inversions, and \(V'\) corresponds to another
unitary matrix. Our conclusions are therefore unchanged.

\hypertarget{pseudoinverse-and-its-relation-to-jordan-svd}{%
\section{Pseudoinverse, and its relation to Jordan
SVD}\label{pseudoinverse-and-its-relation-to-jordan-svd}}

It is natural to consider an analogue of the Moore-Penrose pseudoinverse
over double-complex matrices. Over the real numbers and complex numbers, the
pseudoinverse has a connection to the SVD: Namely, that
\((USV^*)^+ = VS^+U^*\), where \(M^+\) means the pseudoinverse of a
matrix \(M\). This connection is not as useful over double or
double-complex matrices, because a pseudoinverse might exist while a (naive)
SVD might not. The Jordan SVD remedies this, and re-establishes the
connection between SVD and pseudoinverse.

In a twin paper to this one, by Gutin, entitled \emph{An analogue of the
relationship between SVD and pseudoinverse over double-complex matrices}, it
is shown that whenever a double-complex matrix has a pseudoinverse then it
also has a Jordan SVD. This means that the Jordan SVD can, in principle,
always be used to find the pseudoinverse of a double-complex matrix: Let the
double-complex matrix \(M\) have Jordan SVD \(U[J,J]V^*\). Assuming that
\(J\) has no non-trivial nilpotent Jordan blocks, we have that
\(M^+ = V[J^+,J^+]U^*\).

In the same paper, it is also shown that a sufficient condition for
\([A,B]\) to have a Jordan SVD is for
\(\operatorname{rank}(A) = \operatorname{rank}(B) = \operatorname{rank}(AB) = \operatorname{rank}(BA)\).
This is also a necessary and sufficient condition for \([A,B]\) to have
a pseudoinverse.

\hypertarget{extending-the-set-of-permutation-matrices}{%
\section{Extending the set of permutation
matrices}\label{extending-the-set-of-permutation-matrices}}

In order for a decomposition to exist in general (or be computable in a
stable manner) it is often necessary to introduce \emph{pivoting}. In
this section, we investigate a hypothesis on how to extend such
decompositions to double matrices.

First we must ask what the analogue is of a permutation matrix over the
double numbers. It is tempting to say it is of the form
\([P,P^{-1}]\) where \(P\) is a permutation matrix over the real
numbers. The justification for this claim is that such a matrix is
necessarily a real matrix (because \(P^{-1} = P^T\)). Now we argue that
this is undesirable, and that a much nicer analogue is the set of
\emph{pairs} of permutation matrices \([P,Q]\). We will first assume,
for the sake of argument, that a permutation matrix should be of the
form \([P,P^{-1}]\):

Consider the LUP decomposition: \emph{Every} complex matrix \(M\) can be
written in the form \[M = LUP\] where \(L\) is a lower triangular
matrix, \(U\) is an upper triangular matrix, and \(P\) is a permutation
matrix. We would like to extend the above claim to double
matrices as well. So we get: \[[A,B] = [L_1,U_1] [U_2, L_2] [P,P^{-1}]\]
which therefore results in the claim that we can always solve
\[A = L_1 U_2 P, B = P^{-1} L_2 U_1.\] We now demonstrate that this is
not the case: Consider
\(A = \begin{pmatrix}1 & 0 \\ 0 & 1 \end{pmatrix}\) and
\(B = \begin{pmatrix} 0 & 1 \\ 1 & 0 \end{pmatrix}\). By the existence
criteria for LU decomposition (\cite{computations}), the above factorisation
of \(A\) and \(B\) is impossible.

To remedy this problem, we extend the set of permutation matrices to the
double numbers in a different way. We define a
\emph{double permutation matrix} to be of the form \([P,Q]\)
where \(P\) and \(Q\) are arbitrary permutation matrices. The analogue
of LUP decomposition reduces to the trivial but correct claim that we
can take the LUP decomposition separately of two real matrices.

\hypertarget{decompositions-that-feature-permutation-matrices}{%
\section{Decompositions that feature permutation
matrices}\label{decompositions-that-feature-permutation-matrices}}

In this section, we study double analogues of matrix
decompositions that feature permutation matrices. We observe that once
we take components, we arrive at familiar decompositions of real
matrices.

\hypertarget{bkp-decomposition}{%
\subsection{BKP decomposition}\label{bkp-decomposition}}

We use the name \emph{BKP decomposition} (where BKP stands for
\emph{Bunch-Kaufman-Parlett}) for a decomposition first introduced in a
paper by Bunch and Parlett (\cite{bunchparlett}), and then further studied in
a paper by Bunch and Kaufman (\cite{bunchkaufman}).

The \emph{BKP decomposition} (\cite{computations}) is a decomposition of
Hermitian matrices \(M\) that takes the form \[PMP^* = LDL^*\] where
\(P\) is a permutation matrix, \(L\) is a lower triangular matrix, and
perhaps surprisingly, \(D\) is a \emph{block-diagonal} Hermitian matrix
where the blocks are of size \(1\times1\) or \(2\times2\).

The analogue of the above for double matrices is the
decomposition \[[P,Q] [A,A] [Q,P] = [L,U] [D,D] [U,L].\] We take
components to get \[PAQ = LDU\] which is a variant of the LUP
decomposition where the matrix \(D\) is block-diagonal with \(1\times1\)
or \(2\times2\) blocks.

\hypertarget{rrqr-decomposition}{%
\subsection{RRQR decomposition}\label{rrqr-decomposition}}

The \emph{RRQR decomposition} (for \emph{rank-revealing QR
decomposition}, \cite{computations,rrqr}) is a decomposition of real matrices
\(M\) that takes the form \[M\Pi = QR\] where \(\Pi\) is a permutation
matrix, \(Q\) is a unitary matrix, and \(R\) is an upper triangular
matrix.

Consider the double analogue of the above
\[[A,B] [\Pi_1, \Pi_2] = [C,C^{-1}][U,L]\] which breaks apart into
\[A \Pi_1 = CU, \Pi_2 B = LC^{-1}.\] Observe that
\(\Pi_2 B A \Pi_1 = LU\). In other words, we get an LUP decomposition of
the product of two matrices \(B\) and \(A\).

\section{Difficulties automatically extending numerical algorithms
to matrix decompositions featuring permutation matrices}

We briefly note that most algorithms that compute decompositions featuring permutation matrices do not straightforwardly generalise to our decompositions over double matrices. This includes all algorithms based on variants of Gaussian elimination with pivoting. It appears that those algorithms make rigid assumptions about the set of permutation matrices. We have attempted to find a method for carrying out such a generalisation, but are not satisfied with our results.

\hypertarget{conclusion}{%
\section{Open problems}\label{conclusion}}

\subsection{Jordan SVD}

We finish this paper with two open problems concerning the Jordan SVD.
The Jordan SVD can be defined without using double or double-complex
numbers, allowing the broader linear algebra community to suggest
solutions to these problems. We say that a pair of \(n \times n\)
complex matrices \(A\) and \(B\), written \([A,B]\), has a Jordan SVD if
there exist invertible matrices \(P\) and \(Q\), and a Jordan matrix
\(J\), such that:

\begin{itemize}

\item
  \(A = PJQ^{-1}\).
\item
  \(B = QJP^{-1}\).
\end{itemize}

The two open problems are:

\begin{enumerate}
\def\labelenumi{\arabic{enumi}.}

\item
  Find a necessary and sufficient condition for the Jordan SVD to exist.
\item
  Prove that the Jordan matrix \(J\) is unique in all cases, subject to
  the restriction that all the eigenvalues of \(J\) belong to the
  \emph{half-plane}. We defined the half-plane to consist of those
  complex numbers which either have positive real part, or which have
  zero real part but which have non-negative imaginary part.
\end{enumerate}

In a companion paper (\cite{gutin-pinv}), we have made progress on problem 1 by showing that a
sufficient condition for the Jordan SVD to exist is that
\(\operatorname{rank}(A) = \operatorname{rank}(B) = \operatorname{rank}(AB) = \operatorname{rank}(BA)\).
Still, there are matrix pairs \([A,B]\) which have Jordan SVDs but which
don't satisfy this condition. 
The paper \cite{kaplansky} may be relevant, and taking inspiration from
it one might conjecture that a necessary and sufficient condition is for
$AB$ to be similar to $BA$.

In this paper, we made progress on
problem 2 by showing that the Jordan SVD is unique whenever \(A\) and
\(B\) are invertible.

\subsection{Insight into numerical linear algebra over real and complex matrices?}

The main insight of this paper is that some matrix decompositions which appear to be special cases of other matrix decompositions are actually equivalent to them. An example is SVD, which is equivalent to eigendecomposition. This equivalence is revealed only through the use of the double or double-complex numbers. This insight is made more interesting when we observe that this means that a numerical algorithm for computing the SVD can successfully find the eigendecomposition of a matrix simply by generalising the algorithm verbatim to double-complex numbers. In practice though, the resulting eigendecomposition algorithm might fail to converge for some matrices, while the original SVD algorithm might converge for every matrix. The fact that it sometimes works is still intriguing.

But can it ever be useful? The decompositions in section \ref{simple-matrix-decompositions-over-the-double-numbers}, which serve to demonstrate this phenomenon, have some drawbacks. For some matrices, they don't exist, and when they do exist, they may not be computable in a numerically stable manner. While a matrix decomposition over the complex numbers may be stable, its generalisation to the double or double-complex numbers may not be stable (for instance, see the QR decomposition).

Sometimes, a matrix decomposition can be made stable by introducing an additional factor which is a permutation matrix. For instance, compare the LU decomposition $M = LU$ with the PLU decomposition $M = PLU$. It turns out that the naive generalisation of the PLU decomposition to double matrices doesn't exist for all double matrices. We propose therefore to make $P$ an arbitrary \emph{pair} of permutation matrices $[P,Q]$ (understood as a single double matrix). We find that this fixes many existential issues, and not just with the PLU decomposition (which is a fairly trivial example). For instance, this can be done with some stable variants of the LDL decomposition (like the Bunch-Kaufmann-Parlett decomposition) with the result being a stable variant of LU. Unfortunately, it also results in a situation where algorithms for computing matrix decompositions don't generalise in the right way. Algorithms for computing matrix decompositions like the PLU assume that the set of permutation matrices is the usual one. In this paper, we've tried to ensure that the generalisation of an algorithm to double matrices would be (nearly) verbatim, but this may no longer be possible once permutation matrices get involved.

In studying the Jordan SVD, we also studied the polar decomposition (which it is equivalent to). From an existential viewpoint, the polar decomposition is fairly pleasant, because it exists for all invertible double-complex matrices. Can the polar decomposition for double-complex matrices be computed in a numerically stable manner? Also, we're not sure how relevant the double-complex polar decomposition is for people working in practical applications, but we remain hopeful.

Returning to what's been established, we still think the qualitative insight that SVD and eigendecomposition are more tightly linked than previously thought could become useful eventually. The other examples in section \ref{simple-matrix-decompositions-over-the-double-numbers} show tighter links between different decompositions than what one would expect.

\bigskip{\bf Acknowledgments.} I would like to thank Gregory Gutin for many useful comments and suggestions.

\begin{figure}[h]
  \caption{LR algorithm for computing SVD over double-complex matrices. (Note that Sympy's inbuilt LDL decomposition only works for positive-definite matrices, so we needed to make our own version from scratch).}
  \label{splitcomplex-svdcode}
  \begin{Shaded}
  \begin{Highlighting}[]
  \ImportTok{from}\NormalTok{ sympy }\ImportTok{import} \OperatorTok{*}
  
  \NormalTok{J }\OperatorTok{=}\NormalTok{ var(}\StringTok{\textquotesingle{}J\textquotesingle{}}\NormalTok{)}
  
  \KeywordTok{def}\NormalTok{ simplify\_split(e):}
  \NormalTok{    real\_part }\OperatorTok{=}\NormalTok{ (e.subs(J,}\DecValTok{1}\NormalTok{) }\OperatorTok{+}\NormalTok{ e.subs(J,}\OperatorTok{{-}}\DecValTok{1}\NormalTok{))}\OperatorTok{/}\DecValTok{2}
  \NormalTok{    imag\_part }\OperatorTok{=}\NormalTok{ (e.subs(J,}\DecValTok{1}\NormalTok{) }\OperatorTok{{-}}\NormalTok{ e.subs(J,}\OperatorTok{{-}}\DecValTok{1}\NormalTok{))}\OperatorTok{/}\DecValTok{2}
      \ControlFlowTok{return}\NormalTok{ simplify(real\_part }\OperatorTok{+}\NormalTok{ J}\OperatorTok{*}\NormalTok{imag\_part)    }
  
  \KeywordTok{def}\NormalTok{ LDL(A):}
      \CommentTok{"""Implements LDL decomposition for some (most?) Hermitian matrices,}
  \CommentTok{    including some indefinite ones."""}
  \NormalTok{    n }\OperatorTok{=}\NormalTok{ A.rows}
  \NormalTok{    D }\OperatorTok{=}\NormalTok{ zeros(n,n)}
  \NormalTok{    L }\OperatorTok{=}\NormalTok{ eye(n)}
      \ControlFlowTok{for}\NormalTok{ i }\KeywordTok{in} \BuiltInTok{range}\NormalTok{(n):}
          \ControlFlowTok{for}\NormalTok{ j }\KeywordTok{in} \BuiltInTok{range}\NormalTok{(n):}
              \ControlFlowTok{if}\NormalTok{ i }\OperatorTok{==}\NormalTok{ j:}
  \NormalTok{                D[j,j] }\OperatorTok{=}\NormalTok{ A[j,j] }\OperatorTok{{-}} \BuiltInTok{sum}\NormalTok{(L[j,k] }\OperatorTok{*}\NormalTok{ L[j,k].subs(J,}\OperatorTok{{-}}\NormalTok{J) }\OperatorTok{*}\NormalTok{ D[k,k] }
                                        \ControlFlowTok{for}\NormalTok{ k }\KeywordTok{in} \BuiltInTok{range}\NormalTok{(}\DecValTok{0}\NormalTok{, j))}
  \NormalTok{                D[j,j] }\OperatorTok{=}\NormalTok{ simplify\_split(D[j,j])}
              \ControlFlowTok{if}\NormalTok{ i }\OperatorTok{\textgreater{}}\NormalTok{ j:            }
  \NormalTok{                L[i,j] }\OperatorTok{=} \DecValTok{1}\OperatorTok{/}\NormalTok{D[j,j] }\OperatorTok{*}\NormalTok{ (}
  \NormalTok{                    A[i,j] }\OperatorTok{{-}} \BuiltInTok{sum}\NormalTok{(L[i,k] }\OperatorTok{*}\NormalTok{ L[j,k].subs(J,}\OperatorTok{{-}}\NormalTok{J) }\OperatorTok{*}\NormalTok{ D[k,k]}
                                   \ControlFlowTok{for}\NormalTok{ k }\KeywordTok{in} \BuiltInTok{range}\NormalTok{(}\DecValTok{0}\NormalTok{, j)))}
  \NormalTok{                L[i,j] }\OperatorTok{=}\NormalTok{ simplify\_split(L[i,j])}
      \ControlFlowTok{return}\NormalTok{ L, D}

  \KeywordTok{def}\NormalTok{ svd(A, iters}\OperatorTok{=}\DecValTok{20}\NormalTok{):}
  \NormalTok{    M }\OperatorTok{=}\NormalTok{ A.T.subs(J,}\OperatorTok{{-}}\NormalTok{J) }\OperatorTok{*}\NormalTok{ A}
      \ControlFlowTok{for}\NormalTok{ \_ }\KeywordTok{in} \BuiltInTok{range}\NormalTok{(iters):}
  \NormalTok{        L, D }\OperatorTok{=}\NormalTok{ LDL(M)}
  \NormalTok{        M }\OperatorTok{=}\NormalTok{ D }\OperatorTok{**}\NormalTok{ Rational(}\DecValTok{1}\NormalTok{,}\DecValTok{2}\NormalTok{) }\OperatorTok{*}\NormalTok{ L.T.subs(J,}\OperatorTok{{-}}\NormalTok{J) }\OperatorTok{*}\NormalTok{ L }\OperatorTok{*}\NormalTok{ D }\OperatorTok{**}\NormalTok{ Rational(}\DecValTok{1}\NormalTok{,}\DecValTok{2}\NormalTok{)}
      \ControlFlowTok{return}\NormalTok{ D }\OperatorTok{**}\NormalTok{ Rational(}\DecValTok{1}\NormalTok{,}\DecValTok{2}\NormalTok{)}
  \end{Highlighting}
  \end{Shaded}
\end{figure}
  
\begin{figure}[h]
  \caption{Gram-Schmidt implementation for double matrices.}
  \label{gramschmidt-code}
  \begin{Shaded}
  \begin{Highlighting}[]
  \ImportTok{from}\NormalTok{ sympy }\ImportTok{import} \OperatorTok{*}
  
  \NormalTok{J }\OperatorTok{=}\NormalTok{ var(}\StringTok{\textquotesingle{}J\textquotesingle{}}\NormalTok{)}
  
  \KeywordTok{def}\NormalTok{ simplify\_split(e):}
  \NormalTok{    real\_part }\OperatorTok{=}\NormalTok{ (e.subs(J,}\DecValTok{1}\NormalTok{) }\OperatorTok{+}\NormalTok{ e.subs(J,}\OperatorTok{{-}}\DecValTok{1}\NormalTok{))}\OperatorTok{/}\DecValTok{2}
  \NormalTok{    imag\_part }\OperatorTok{=}\NormalTok{ (e.subs(J,}\DecValTok{1}\NormalTok{) }\OperatorTok{{-}}\NormalTok{ e.subs(J,}\OperatorTok{{-}}\DecValTok{1}\NormalTok{))}\OperatorTok{/}\DecValTok{2}
      \ControlFlowTok{return}\NormalTok{ simplify(real\_part }\OperatorTok{+}\NormalTok{ J}\OperatorTok{*}\NormalTok{imag\_part)    }
  
  \KeywordTok{def}\NormalTok{ gram\_schmidt\_qr(m):}
  \NormalTok{    u }\OperatorTok{=}\NormalTok{ zeros(m.rows,m.cols)}
      \ControlFlowTok{for}\NormalTok{ k }\KeywordTok{in} \BuiltInTok{range}\NormalTok{(m.cols):}
  \NormalTok{        u[:,k] }\OperatorTok{=}\NormalTok{ m[:,k] }\OperatorTok{{-}} \BuiltInTok{sum}\NormalTok{((proj(u[:,j],m[:,k]) }\ControlFlowTok{for}\NormalTok{ j }\KeywordTok{in} \BuiltInTok{range}\NormalTok{(k)),}
  \NormalTok{                              start}\OperatorTok{=}\NormalTok{zeros(m.rows, }\DecValTok{1}\NormalTok{))}
  \NormalTok{    Q }\OperatorTok{=}\NormalTok{ simplify\_split(u }\OperatorTok{*}\NormalTok{ split\_dot(u,u) }\OperatorTok{**}\NormalTok{ Rational(}\OperatorTok{{-}}\DecValTok{1}\NormalTok{,}\DecValTok{2}\NormalTok{))}
  \NormalTok{    R }\OperatorTok{=}\NormalTok{ zeros(m.rows, m.cols)}
      \ControlFlowTok{for}\NormalTok{ i }\KeywordTok{in} \BuiltInTok{range}\NormalTok{(m.cols):}
          \ControlFlowTok{for}\NormalTok{ j }\KeywordTok{in} \BuiltInTok{range}\NormalTok{(m.cols):}
              \ControlFlowTok{if}\NormalTok{ i }\OperatorTok{\textless{}=}\NormalTok{ j:}
  \NormalTok{                R[i,j] }\OperatorTok{=}\NormalTok{ split\_dot(Q[:,i], m[:,j])}
  \NormalTok{    R }\OperatorTok{=}\NormalTok{ simplify\_split(R)}
      \ControlFlowTok{return}\NormalTok{ Q, R}
  
  \KeywordTok{def}\NormalTok{ split\_dot(u, v):}
      \ControlFlowTok{return}\NormalTok{ u.T.subs(J,}\OperatorTok{{-}}\NormalTok{J) }\OperatorTok{*}\NormalTok{ v}
  
  \KeywordTok{def}\NormalTok{ proj(u, a):}
      \ControlFlowTok{return}\NormalTok{ u }\OperatorTok{*}\NormalTok{ split\_dot(u,a) }\OperatorTok{*}\NormalTok{ split\_dot(u,u).inv()}
  \end{Highlighting}
  \end{Shaded}
\end{figure}

\begin{figure}[h]
  \caption{Householder QR implementation for double matrices.}
  \label{householdercode}
\begin{Shaded}
\begin{Highlighting}
\KeywordTok{def}\NormalTok{ householder\_reflection(x):}
    \CommentTok{"""Computes a householder reflection from a split{-}complex vector."""}
    \ControlFlowTok{for}\NormalTok{ i }\KeywordTok{in} \BuiltInTok{range}\NormalTok{(x.rows):}
        \ControlFlowTok{if}\NormalTok{ scabs(x[i]) }\OperatorTok{!=} \DecValTok{0}\NormalTok{:}
            \ControlFlowTok{break}
        \ControlFlowTok{if}\NormalTok{ i }\OperatorTok{==}\NormalTok{ x.rows }\OperatorTok{{-}} \DecValTok{1}\NormalTok{:}
            \ControlFlowTok{return}\NormalTok{ eye(x.rows)}
    \ControlFlowTok{if}\NormalTok{ i }\OperatorTok{!=} \DecValTok{0}\NormalTok{:}
\NormalTok{        initial\_rotation }\OperatorTok{=}\NormalTok{ eye(x.rows)}
\NormalTok{        initial\_rotation[}\DecValTok{0}\NormalTok{,}\DecValTok{0}\NormalTok{] }\OperatorTok{=}\NormalTok{ initial\_rotation[i,i] }\OperatorTok{=} \DecValTok{0}
\NormalTok{        initial\_rotation[i,}\DecValTok{0}\NormalTok{] }\OperatorTok{=} \OperatorTok{{-}}\DecValTok{1}
\NormalTok{        initial\_rotation[}\DecValTok{0}\NormalTok{,i] }\OperatorTok{=} \DecValTok{1}
        \ControlFlowTok{return}\NormalTok{ householder\_reflection(initial\_rotation }\OperatorTok{*}\NormalTok{ x) }\OperatorTok{*}\NormalTok{ initial\_rotation}
\NormalTok{    alpha }\OperatorTok{=}\NormalTok{ simplify\_split(}\OperatorTok{{-}}\NormalTok{x[}\DecValTok{0}\NormalTok{] }\OperatorTok{/}\NormalTok{ vec\_norm(Matrix([x[}\DecValTok{0}\NormalTok{]])) }\OperatorTok{*}\NormalTok{ vec\_norm(x))}
\NormalTok{    e1 }\OperatorTok{=}\NormalTok{ simplify\_split(Matrix([}\DecValTok{1}\NormalTok{] }\OperatorTok{+}\NormalTok{ [}\DecValTok{0}\NormalTok{] }\OperatorTok{*}\NormalTok{ (x.rows }\OperatorTok{{-}} \DecValTok{1}\NormalTok{)))}
\NormalTok{    u }\OperatorTok{=}\NormalTok{ simplify\_split(x }\OperatorTok{{-}}\NormalTok{ e1 }\OperatorTok{*}\NormalTok{ alpha)}
\NormalTok{    v }\OperatorTok{=}\NormalTok{ simplify\_split(u }\OperatorTok{/}\NormalTok{ vec\_norm(u))}
    \ControlFlowTok{return}\NormalTok{ simplify\_split(expand(eye(x.rows) }\OperatorTok{{-}} \DecValTok{2} \OperatorTok{*}\NormalTok{ v }\OperatorTok{*}\NormalTok{ v.T.subs(J,}\OperatorTok{{-}}\NormalTok{J)))}

\KeywordTok{def}\NormalTok{ householderQR(m):}
    \CommentTok{"""Computes the Q matrix in the QR decomposition using Householder}
\CommentTok{    reflections"""}
    \ControlFlowTok{if}\NormalTok{ m.cols }\OperatorTok{==} \DecValTok{1}\NormalTok{:}
        \ControlFlowTok{return}\NormalTok{ householder\_reflection(m)}
\NormalTok{    Q1 }\OperatorTok{=}\NormalTok{ simplify\_split(householder\_reflection(m[:,}\DecValTok{0}\NormalTok{]))}
\NormalTok{    Qrest }\OperatorTok{=}\NormalTok{ householderQR(simplify\_split((Q1 }\OperatorTok{*}\NormalTok{ m)[}\DecValTok{1}\NormalTok{:,}\DecValTok{1}\NormalTok{:]))}
    \ControlFlowTok{return}\NormalTok{ simplify\_split(Q1.T.subs(J,}\OperatorTok{{-}}\NormalTok{J) }\OperatorTok{*}\NormalTok{ diag(eye(}\DecValTok{1}\NormalTok{),Qrest))}
\end{Highlighting}
\end{Shaded}
\end{figure}

\bibliographystyle{unsrt}
\bibliography{paper1_1}

\end{document}